\documentclass[12pt]{article}
\usepackage[utf8]{inputenc}
\usepackage{geometry}
\usepackage{amsmath, amsthm, amssymb} 
\usepackage{graphicx}
\usepackage[dvipsnames]{xcolor}
\graphicspath{ {./images/} }
\geometry{verbose,tmargin=1in,bmargin=1in,lmargin=1in,rmargin=1in}
\usepackage{float}
\usepackage[normalem]{ulem}
\usepackage{hyperref}
\hypersetup{
    colorlinks=true,
    linkcolor=magenta,
    filecolor=magenta,      
    urlcolor=magenta,
    citecolor=magenta
} 

\theoremstyle{plain}
\newtheorem{proposition}{Proposition}
\newtheorem{theorem}{Theorem}

\newtheorem{definition}{Definition}
\newtheorem{lemma}{Lemma}

\newtheorem{example}{Example}
\newtheorem{remark}{Remark}

\newcommand{\R}{\mathbb{R}}
\newcommand{\C}{\mathbb{C}}
\newcommand{\cset}{\mathcal{C}_\Omega}
\newcommand{\cdisc}{\mathcal{D}_\Omega}


\title{Catastrophe in elastic tensegrity frameworks}
\author{Alexander Heaton and Sascha Timme}
\date{\today}

\begin{document}

\maketitle

\begin{abstract}
    We discuss elastic tensegrity frameworks made from rigid bars and elastic cables, depending on many parameters.
    For any fixed parameter values, the stable equilibrium position of the framework is determined by minimizing an energy function subject to algebraic constraints.
    As parameters smoothly change, it can happen that a stable equilibrium disappears. This loss of equilibrium is called \emph{catastrophe} since the framework will experience large-scale shape changes despite small changes of parameters. Using nonlinear algebra we characterize a semialgebraic subset of the parameter space, the \emph{catastrophe set}, which detects the merging of local extrema from this parametrized family of constrained optimization problems, and hence detects possible catastrophe. Tools from numerical nonlinear algebra allow reliable and efficient computation of all stable equilibrium positions as well as the catastrophe set~itself.
\end{abstract}

\section{Introduction}\label{section:introduction}

Tensegrity structures appear in nature and engineering, scaling in size from nanometers \cite{LHTIS2010} to meters \cite{TP2003}, used on the earth \cite{M2003, SO2009} and in outer space \cite{T2002, ZGSMPTDG2012}. Since the tension in the lightweight cables provides stability \cite{C1978, ZO2015}, they can hold their shape without any locking mechanisms. This and other advantages make tensegrity highly appealing for \textit{deployable structures} \cite{P2001}. They can significantly change size and shape, using several different functional configurations during their application.
        
In this article, we discuss \textit{elastic tensegrity frameworks} (Definition \ref{definition:elastic-tensegrity-framework}) made from rigid bars and elastic cables, similar to those appearing in \cite{LieWuPaulinoQi2017ProgrammableDeploymentTensegrityStructures, SternJayaramMurugan2018ShapingTopologyFoldingPathways}, but also similar to the \textit{tensegrity frameworks} defined in \cite{ConnellyWhiteley1996} which are popular in the mathematics and combinatorics literature. Instead of edge length inequalities as in \cite{ConnellyWhiteley1996}, we use Hooke's law to introduce an energy function that distinguishes between bars and elastic cables. The configuration is then determined by solving a constrained optimization problem.
This provides a large family of simple models which are effectively treated using the theory of elasticity and energy minimization (see Definition \ref{definition:stable-elastic-tensegrity-framework}). We use numerical nonlinear algebra to calculate \textit{all} equilibrium positions, in contrast to the more widely-used iterative methods (e.g. Newton-Raphson) which can only find one solution at a time, with no guarantees on finding them~all.

Elastic tensegrity frameworks depend on many parameters, e.g., the length of its rigid bars or the fixed position of some nodes. For a given framework we can choose a space of control parameters $\Omega$ whose values are viewed as the parameters we can manipulate. A \textit{path} is a map from the unit interval $y:[0,1]\subset \mathbb{R} \to \Omega$ which describes how the controls $y(t)$ vary in time. We use numerical nonlinear algebra to track the changes in stable equilibrium positions of the framework as the control parameters vary. Most importantly, we are interested in a positive-dimensional semialgebraic subset $\cset \subset \Omega$ called the \textit{catastrophe set} (Definition \ref{definition:catastrophe-set-D-Omega}). This set records those values of control parameters whose crossing could result in a discontinuous jump in the location of the nearest local equilibrium, since the current equilibrium can disappear after crossing $\cset$. This loss of equilibrium and the resulting behavior is called a \textit{catastrophe}. The importance of this set is well-known (see \cite{Arnold1986CatastropheTheory} for an overview), but we find that studying it from the algebraic perspective provides useful benefits.

Therefore, the purpose of this article is to show how techniques from numerical nonlinear algebra can be used to compute the catastrophe set $\cset$. For this we introduce an algebraic reformulation in Section \ref{section:algebraic-from-the-start} that we use to compute a superset $\cdisc \supset \cset$ which contains the relevant information for the original problem (Section \ref{section:non-algebraic-setup-elastic-tensegrity-frameworks}). This algebraic set $\cdisc$, the \emph{catastrophe discriminant}, detects the merging of equilibrium solutions from a parametrized family of constrained optimization problems.

Hooke's law provides a simple model which has proven extremely effective in an enormous amount of real-world situations. Also, in the article \textit{The Catastrophe Controversy} \cite{Guckenheimer-catastrophe-controversy} Guckenheimer writes ``The application of Catastrophe -
Singularity Theory to problems of elastic stability has been the greatest
success of the theory thus far.'' Thus catastrophe discriminants are of known importance, but they are very difficult to explicitly compute and this has limited their usefulness. With the development of efficient techniques in numerical nonlinear algebra, explicit computation of catastrophe discriminants is now within reach. Therefore, another purpose of this article is to explicitly describe these computations for a family of simple models (elastic tensegrity frameworks) which will be useful in many different applications.

\begin{figure}[!htb]
    \centering
    \includegraphics[width=\textwidth]{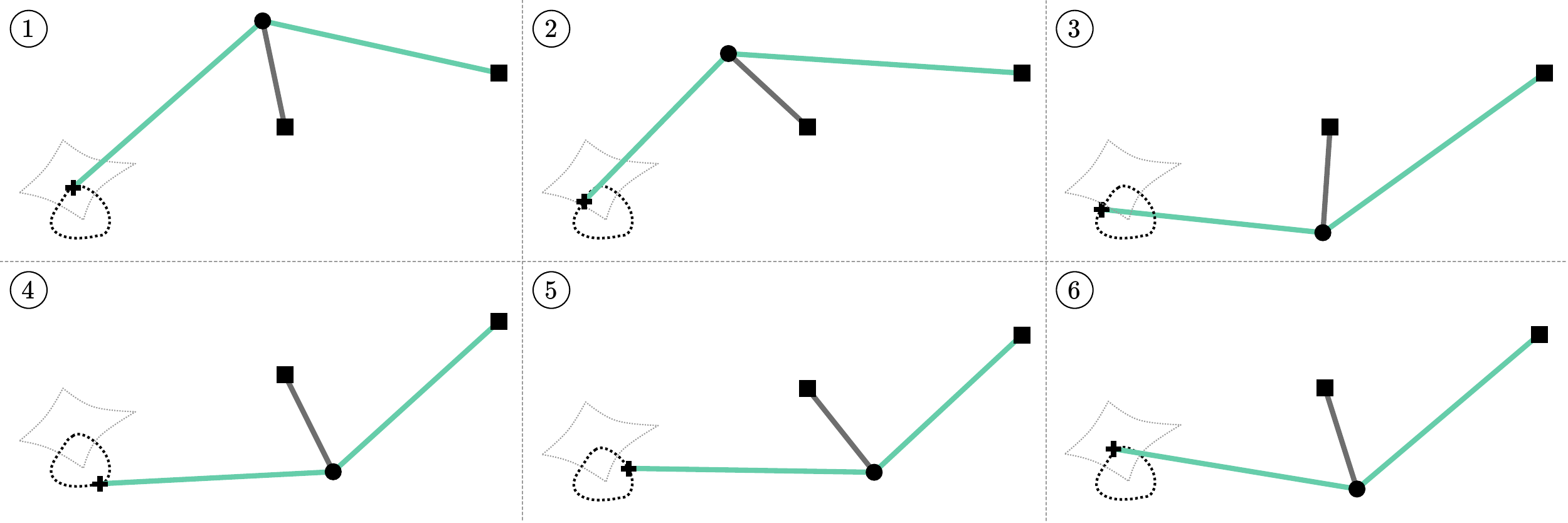}
    \caption{Loop crossing the catastrophe set. The black edge is a rigid bar and the green edges are elastic cables. Square nodes have fixed positions, the cross node is controlled around a loop, and the circular node's position is determined by minimizing the potential energy in the green elastic cables.}
    \label{figure:zeemans-machine-loop}
\end{figure}

A running example, simple enough to understand yet complicated enough to illustrate the advantage of knowing $\cset$, is \textit{Zeeman's catastrophe machine}.
Zeeman's catastrophe machine consists of a rigid bar which can rotate freely around one of its endpoints. Attached to the non-fixed endpoint are two elastic cables.
The end of one of the cables is fixed, the other can be moved freely.
The machine and its behavior is depicted in Figure \ref{figure:zeemans-machine-loop} at six discrete-time snapshots.
For more on this example see \cite{PostonWoodcock1973ZeemansCatastropheMachine}, where they give a parametrization of $\cset$ for a simplified machine, and implicit equations defining $\cset$ for the actual machine. See also \cite[Section 4]{Arnold1986CatastropheTheory}. In contrast, we use sample points to encode $\cset$ not only for Zeeman's machine but for any elastic tensegrity framework. The basic idea of Zeeman's machine is to control the free endpoint $y(t) \in \Omega \simeq \mathbb{R}^2$ of one cable while the rotating rigid bar settles into a position of minimum energy.
Using numerical nonlinear algebra we can reliably compute all complex solutions to this constrained optimization problem and find among them the real-valued and stable local minima. In addition, we compute a pseudo-witness set \cite{HAUENSTEIN20103349} for $\cdisc$ allowing effective sampling of the catastrophe set $\cset$, and therefore easily computable information on when catastrophes may occur, and how to avoid them entirely.

For those readers new to Zeeman's machine, consider the behavior depicted in Figure~\ref{figure:zeemans-machine-loop}. The black bar can rotate around its base, as the green elastic cables pull on its free endpoint.
As one of the cable endpoints moves smoothly, the stable equilibrium position of the machine also moves smoothly... usually. Upon crossing $\cset$ it can happen that this stable equilibrium disappears. This forces the machine to rapidly change shape, moving towards some new equilibrium. 
Without knowledge of $\cset$, these behaviors can be very surprising. For example, moving the control point in a small loop does not ensure a return to the original position for the machine (see Figure \ref{figure:zeemans-machine-loop}). 
Playing with this example, one quickly discovers the advantages of knowing $\cset$. Seemingly random catastrophes become easily predictable. 

\begin{figure}[ht]
    \centering
    \includegraphics[width=\textwidth]{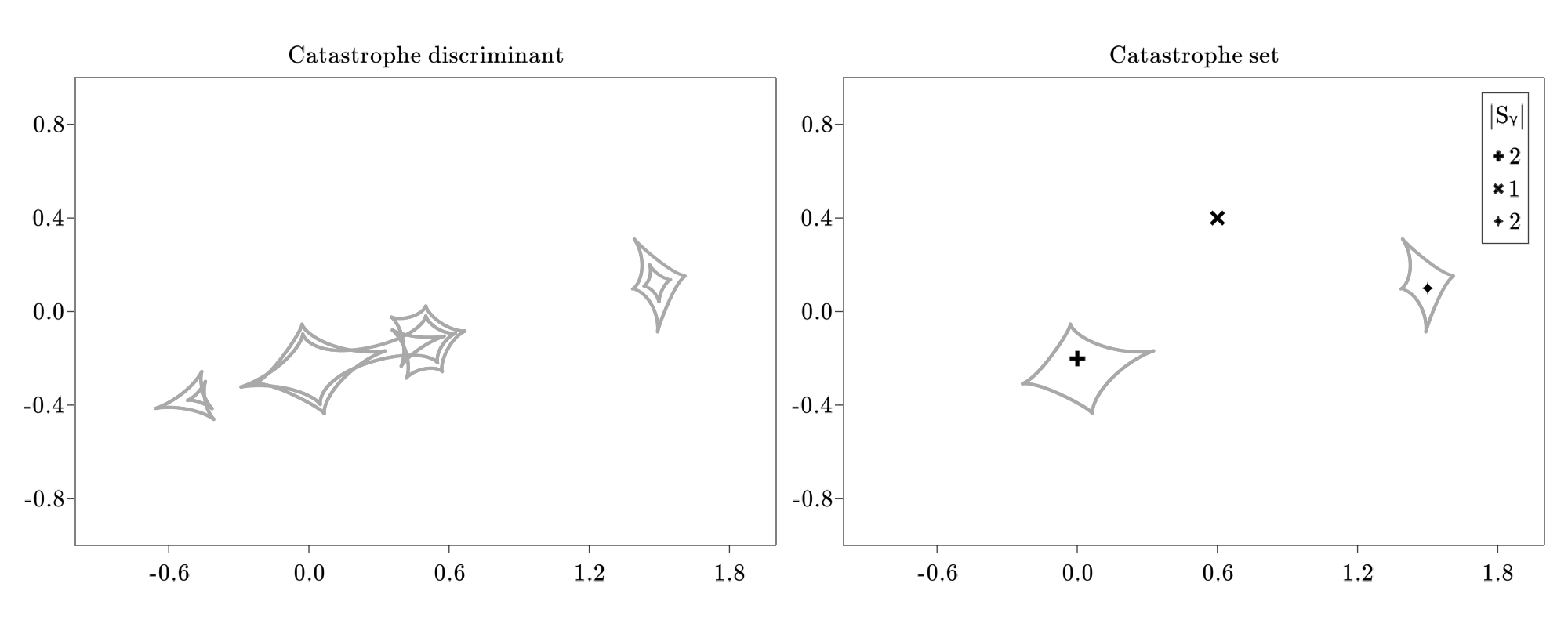}
    \caption{Catastrophe discriminant $\cdisc$ (left, degree 72) and catastrophe set $\cset$ (right) for Zeeman's machine, sampled numerically using homotopy continuation methods.}
    \label{figure:computed-catastrophe-curves}
\end{figure}

Section \ref{section:non-algebraic-setup-elastic-tensegrity-frameworks} gives the basic definitions for elastic tensegrity frameworks. In Section \ref{section:algebraic-from-the-start} we describe an algebraic reformulation of the relevant energy minimization problem. In so doing we naturally arrive at the \textit{equilibrium degree} of an elastic tensegrity framework (Definition \ref{definition:equilibrium-degree}), and the \textit{catastrophe degree} of its catastrophe discriminant (Definition \ref{definition:catastrophe-degree}). These numbers are intrinsic to the algebraic approach and characterize the algebraic complexity of each elastic tensegrity framework for a dense set of control parameters. Though the algebraic approach naturally deals with the algebraic set $\cdisc$, the original problem deals with the semialgebraic set $\cset$ (Definition \ref{definition:catastrophe-set-D-Omega}). For Zeeman's machine, both sets are shown in Figure~\ref{figure:computed-catastrophe-curves}. We note that $\cset$ in Figure~\ref{figure:computed-catastrophe-curves} is the \textit{envelope} of a family of curves, each of which is a \textit{conchoid of Nicomedes} \cite{Klein1895FamousProblemsElementaryGeometry, PostonWoodcock1973ZeemansCatastropheMachine}. 
Section 3 finishes by proving the main Theorem \ref{theorem:path-lifting-correct}, which shows that to any control path $y(t)$ that avoids $\cset$ there corresponds a unique path of stable configurations of the elastic tensegrity framework. 
In Section \ref{section:algebraic-computations} we give more details on the required computations using numerical nonlinear~algebra. Finally, in Section \ref{section:elastic-four-bar} we demonstrate our newly developed tools on a four-bar linkage, which easily becomes an elastic tensegrity framework upon the attachment of two elastic cables (Figure \ref{fig:fourbar-abstract}). We compute both $\cdisc$ and $\cset$ (Figure \ref{fig:fourbar_catastrophes}) and explicitly demonstrate one possible catastrophe (Figure \ref{fig:fourbar_before_after}).
Code that reproduces all examples in this article can be found at \url{https://doi.org/10.5281/zenodo.4056121}.

\section{Elastic tensegrity frameworks}\label{section:non-algebraic-setup-elastic-tensegrity-frameworks}

In this section we formally introduce elastic tensegrity frameworks and the necessary definitions and concepts to talk about their equilibrium positions.
Let $G = ([n],E)$ be a graph on $[n] := \{1,2,\dots,n\}$ nodes and $E = B \cup C$ edges.
Edges are two-element subsets of $[n]$.
Every $ij \in B$ is a rigid bar between nodes $i$ and $j$ and we have $\ell_{ij}$ as its length.
Similarly, every $ij \in C$ is an elastic cable between nodes $i$ and $j$
that has natural resting length $r_{ij}$ and a constant of elasticity $c_{ij}$.
The graph $G$ is embedded by a map $p:[n] \to \R^d$ and we denote the coordinates of the $n$ nodes of $G$ by $p_1=(p_{11},\ldots,p_{1d}),\dots,p_n \in \R^d$ and identify the space of coordinates with $\R^{nd}$.

\begin{example}[Zeeman's catastrophe machine]
\label{ex:zeeman-1}
We illustrate the definitions and concepts of this and the next section on Zeeman's catastrophe machine.
Zeeman's machine is an elastic tensegrity framework on $n=4$ nodes with edges $E = \{14,24,34\}$ partitioned as $B=\{14\}$ and $C=\{24,34\}$. See Figure \ref{fig:zeeman-catastrophe-setup} for an illustration.
\end{example}

\begin{figure}[hb]
    \centering
    \includegraphics[width=0.5\textwidth]{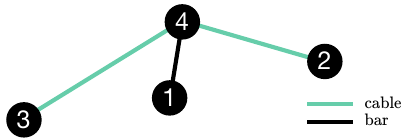}
    \caption{Our setup of Zeeman's catastrophe machine}
    \label{fig:zeeman-catastrophe-setup}
\end{figure}

For every rigid bar $ij \in B$ we define the bar constraint polynomial
\begin{equation}\label{equation:bar-constraints-bij}
    b_{ij} := \sum_{k \in [d]} (p_{ik} - p_{jk})^2 - \ell_{ij}^2
\end{equation}
and denote by $b$
the polynomial system whose component functions are the $b_{ij}$ for $ij \in B$.
For each elastic cable $ij \in C$ we define its potential energy $q_{ij}$ using Hooke's law
\begin{equation}\label{equation:energy-fcn-Q-with-square-roots}
    q_{ij} := \frac12 c_{ij} \left( \text{max} \left\{ 0, \,\, \sqrt{ \sum_{k \in [d]} (p_{ik} - p_{jk})^2 } - r_{ij} \right\} \right)^2 \hspace{1cm} \text{with} \hspace{1cm} Q =  \sum_{ij \in C} q_{ij}\,.
\end{equation}
This says that the energy $q_{ij}$ is proportional to the square of the distance the elastic cable has been stretched past its natural resting length. Though we have only introduced rigid bars and elastic cables, one could easily add compressed elastic edges which want to expand according to Hooke's law. For ease of exposition we proceed with elastic cables and rigid bars, rather than also including compressive struts in our notation.
 
We have introduced several variables. As shorthand we use the symbols $p,\ell,r,c$ to refer to the variables
\begin{equation}\label{definition:variable-names}
    \begin{array}{l}
        p_{ik} \hspace{1cm} \text{for } i \in [n], k \in [d] \\
        \ell_{ij} \hspace{1cm} \text{for } ij \in B \\
        r_{ij} \hspace{1cm} \text{for } ij \in C \\
        c_{ij} \hspace{1cm} \text{for } ij \in C.
    \end{array}
\end{equation}
In various examples some of these variables will be viewed as \textit{control parameters} $y \in Y \simeq \R^{m_1}$ whose values we can fix or manipulate at will, while the other variables will be viewed as \textit{internal parameters} $x \in X \simeq \R^{m_2}$ whose values are determined by the controls $y$ and the principle of energy minimization.
Often we may fix several control parameters and let others vary in some subset $\Omega \subset Y$.

\begin{example}[Zeeman's catastrophe machine (continued)]
\label{ex:zeeman-2}
We continue with Example \ref{ex:zeeman-1}.
We choose $X,Y$ as
\begin{align*}
    X = \left\{ \, (p_{41}, p_{42}) \, \right\} &\simeq \mathbb{R}^2\\
    Y = \left\{ \, (p_{11}, p_{12}, p_{21}, p_{22}, p_{31}, p_{32}, \ell_{14}, r_{24}, r_{34}, c_{24}, c_{34}) \, \right\} &\simeq \mathbb{R}^{11}
\end{align*}
but only consider the subset $\Omega \subset Y$ as in
\begin{equation*}
    \Omega = \left\{ \,\, (0,0,2,-1,p_{31},p_{32},1,1,1,0.5, 0.5) \,\, : \,\, (p_{31},p_{32}) \in \mathbb{R}^2 \right\} \subset Y.
\end{equation*}
In this setup, we have fixed everything except the coordinates of nodes $3$ and $4$. We \textit{control} the $y = (p_{31},p_{32}) \in \Omega$ and \textit{solve for} the $ x = (p_{41},p_{42}) \in X$. This means that for a given $y = (p_{31},p_{32}) \in \Omega$ we find the coordinates $ x = (p_{41},p_{42}) \in X$ which minimize
\begin{multline*}
    Q(p_{41},p_{42}) = \frac{1}{4} \text{max}\left\{ 0, \sqrt{{\left(2 - p_{41}\right)}^{2} + {\left(-1 - p_{42}\right)}^{2}} - 1 \right\} \\
    + \frac{1}{4} \text{max}\left\{ 0, \sqrt{{\left(p_{31} - p_{41}\right)}^{2} + {\left(p_{32} - p_{42}\right)}^{2}} - 1 \right\}
\end{multline*}
restricted to the set $\{ (x,y) : b(x,y) = 0 \} \cap (X \times \Omega )$. In this case, since $B = \{ 14 \}$ the constraints $b(x,y) = 0$ have only one equation $b_{14}(x,y)=0$ which reads
\begin{equation*}
    b_{14}(x,y) = {\left(0 - p_{41}\right)}^{2} + {\left(0 - p_{42}\right)}^{2} - 1^2 = 0.
\end{equation*}
\end{example}

\begin{definition}\label{definition:elastic-tensegrity-framework}
An \textit{elastic tensegrity framework} is a graph on nodes $[n]$ with edges $E \subset \binom{[n]}{2}$ along with the energy function $Q$ of (\ref{equation:energy-fcn-Q-with-square-roots}), a partition $E = B \cup C$ of the edge set into rigid bars and elastic cables, and a partition of variables $p,\ell,r,c$ of (\ref{definition:variable-names}) into internal and control parameters $X$ and $\Omega \subset Y$. A \textit{configuration} of an elastic tensegrity framework is a tuple $(x,y) \in X \times Y$ satisfying the bar constraints $b(x,y) = 0$ from \eqref{equation:bar-constraints-bij}.
\end{definition}

\begin{remark}
We note that \cite{ConnellyWhiteley1996} used the concept of an energy function as motivation for their definition of prestress stability. Their definition of a \textit{tensegrity framework} uses inequalities on edge lengths to distinguish bars from cables and struts. Our definition puts the energy function at center stage and also allows for a space of control parameters $\Omega$, which we need in order to define catastrophe discriminants $\cdisc \subset \Omega$ below.
\end{remark}

\begin{definition}\label{definition:stable-elastic-tensegrity-framework}
We describe the interaction between an elastic tensegrity framework and the energy function given in (\ref{equation:energy-fcn-Q-with-square-roots}) with the following definitions.
\begin{enumerate}
    \item Fix a tuple of control parameters $y \in Y$. An elastic tensegrity framework in configuration $(x,y)$ is \textit{stable} if the internal parameters $x \in X$ are a \textit{strict local minimum} of the energy function $Q$ restricted to the {algebraic set $\{ x \in X : b(x,y)=0 \}$} 
    of internal parameters satisfying the bar constraints $b(x,y)=0$ of (\ref{equation:bar-constraints-bij}).
    \item {For fixed controls $y \in Y$ we collect all strict local minima in the \textit{stability set} $\mathcal{S}_y$, defined as all internal parameters $x \in X$ such that the corresponding elastic tensegrity framework $(x,y)$ is stable.}
    \item The \textit{stability correspondence} $\mathcal{SC}$ is the set of pairs $(x,y) \in X \times Y$ such that $x \in \mathcal{S}_y$. For a given subset $\Omega \subset Y$ of controls we let $\mathcal{SC}_\Omega$ be all $(x,y) \in X \times \Omega \subset X \times Y$ such that  $x \in \mathcal{S}_y$.
\end{enumerate}
If we are only interested in a subset of control parameters $\Omega \subset Y$ these definitions apply verbatim with $\Omega$ replacing $Y$.
\end{definition}

\begin{example}[Zeeman's catastrophe machine (continued)]\label{ex:zeeman-3}
We continue with Example \ref{ex:zeeman-2}. Figure \ref{figure:zeemans-machine-stability-set} shows Zeeman's catastrophe machine in a stable configuration. However, for that specific value of $y$ the stability set $\mathcal{S}_y$ contains two points, with the second configuration shown in grey. Since the constraints $b(x,y)=0$ essentially describe a circle we can also plot the periodic energy function in Figure \ref{figure:zeemans-machine-stability-set}. For the particular value of the controls $y \in \Omega$ we chose, there are two local minima, and hence $|\mathcal{S}_y| = 2$.
\end{example}
\begin{figure}[!htb]
    \centering
    \includegraphics[width=0.7\textwidth]{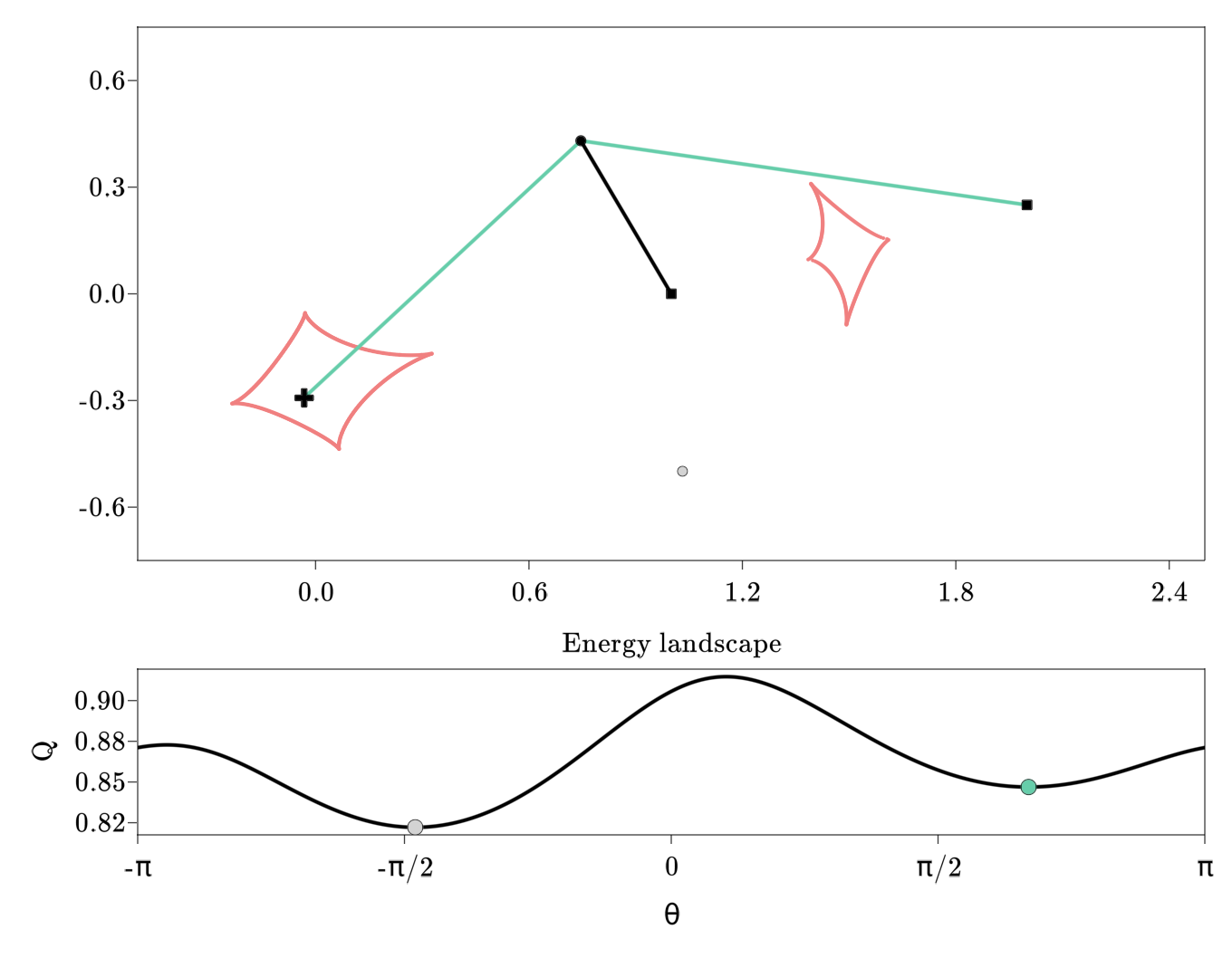}
    \caption{Zeeman machine in a stable configuration with $|\mathcal{S}_y| = 2$. The second stable position of node 4 is depicted in gray.}
    \label{figure:zeemans-machine-stability-set}
\end{figure}
In the following, we focus on stable elastic tensegrity frameworks and the behavior when control parameters $y \in \Omega \subset Y$ change.
For this, consider a \textit{smooth path of control parameters}
\begin{align}\label{eqn:smooth-path-control-parameters}
    y:[0,1] \subset \mathbb{R} &\to \Omega \subset Y\\
    t &\mapsto y(t) \nonumber
\end{align}
and an \textit{initial condition} $(x(0),y(0))$ which is stable according to Definition \ref{definition:stable-elastic-tensegrity-framework}. We are interested in the time evolution of the internal parameters $x(t)$ determined by minimizing $Q$ constrained by $b$ for the given path $y(t)$ of control parameters. In particular, can we identify certain regions where small changes in $y(t)$ can cause large changes in the tensegrity framework? In Section \ref{section:algebraic-from-the-start} we solve an algebraic reformulation of this problem, defining the \textit{catastrophe discriminant} $\cdisc \subset \Omega \subset Y$ and proving that  as long as our controls $y(t)$ avoid a smaller, semialgebraic \textit{catastrophe set} $\cset \subset \cdisc$, then stable local minima at the initial condition are preserved, evolving as a unique path of stable local minima $(x(t),y(t)) \in \mathcal{SC}$.

\section{Algebraic reformulation}\label{section:algebraic-from-the-start}
In this section, we transfer questions about the stability of elastic tensegrity frameworks into an algebraic problem.
The motivation is as follows. The computation of $\mathcal{S}_y$ is in general a very hard problem since the points in $\mathcal{S}_y$ are \textit{all} local minima of a constrained optimization problem. Thus, standard optimization methods are not sufficient since they yield in each run at most one local minimum and cannot provide guarantees to find all local minima.
In contrast, if we work with systems of polynomial equations we can
apply tools from numerical nonlinear algebra to obtain all solutions. This is discussed in more detail in Section \ref{section:algebraic-computations}.

In the following, let $([n],E)$ be an elastic tensegrity framework with variables $p,\ell,r,c$ from (\ref{definition:variable-names}) partitioned into the internal parameters $x \in X \simeq \C^{m_1}$ and the control parameters $y \in Y \simeq \C^{m_2}$.
Compared to the previous section we now work over the complex numbers and the real numbers both. This will allow us to use homotopy continuation in Section \ref{section:algebraic-computations} to find all critical points of the optimization problem. It also allows us to use the degree of an algebraic set. If $A$ is any subset of some $\C^m$ then let $A_\R$ denote the points with real-valued coordinates. Finally, we will sometimes refer to \emph{general} $a \in A$, by which we mean that $a$ lies outside some algebraic subset of $A$, i.e., $a$ lies in some Zariski-open subset of $A$.

Let $\Omega$ be a smooth algebraic subset of the control parameters $Y$ we wish to manipulate with controls $y(t) \in \Omega$. This allows us, among other things, to consider movement of a node constrained to motion in a sphere, perhaps determined by a rigid bar.
We introduce variables $\delta_{ij}$ for $ij \in C$ to eliminate the square roots in the potential energies $q_{ij}$. For $ij \in E$, let
\begin{align*}
g_{ij} =
\begin{cases}
\ell_{ij}^2 - \sum_{k \in [d]} (p_{ik} - p_{jk})^2 \hspace{0.5cm} \text{if } ij \in B\\
\delta_{ij}^2 - \sum_{k \in [d]} (p_{ik} - p_{jk})^2 \hspace{0.5cm} \text{if } ij \in C
\end{cases}
\end{align*}
and denote by $g: X \times \C^{|C|} \times Y \to \C^{|E|}$ the polynomial system whose component functions are the $g_{ij}$.
Furthermore, denote by $\mathcal{G}_y$ the zero set of $g$ for a fixed $y \in Y$
$$\mathcal{G}_y := \{ (x, \delta) \in X \times \C^{|C|} \,\,\,\, | \,\,\,\, g(x, \delta, y) = 0 \}\,.$$
For $ij \in C$ let
\begin{equation*}
    \widetilde{q_{ij}} = \frac{1}{2} c_{ij} (\delta_{ij} - r_{ij})^2 \hspace{1cm} \text{with } \hspace{1cm} \widetilde{Q}_y = \sum_{ij \in C} \widetilde{q_{ij}}
\end{equation*}
an algebraic energy function $\widetilde{Q}_y$. The subscript emphasizes possible dependency~on~$y \in Y$.

To study the stability set $\mathcal{S}_y$ we look at the critical points of $\widetilde{Q}_y(x, \delta)$ subject to $(x, \delta) \in (\mathcal{G}_y)_\R$. 
A point $(x, \delta) \in (\mathcal{G}_y)_\R$ is a critical point of the energy function $\widetilde{Q}_y$ if the gradient $\nabla \widetilde{Q}_y$ is orthogonal to the tangent space of $(\mathcal{G}_y)_\R$ at $(x, \delta)$.
If the algebraic set $\mathcal{G}_y$ is a complete intersection, i.e., the codimension of $\mathcal{G}_y$ equals $|E|$, then
we can directly apply the technique of Lagrange multipliers to compute the critical points. 
In the following, we assume for ease of exposition that this is the case.
However, this is not a critical assumption and the results can be extended to non-complete intersection by using standard numerical nonlinear algebra techniques for randomizing overdetermined systems (see \cite[Chapter 13]{Sommese:Wampler:2005}).

Throughout the article we will assume that $\mathcal{G}_y$ is smooth, since at non-manifold points the first-order conditions for critical points are not applicable. This is really a requirement on the choice of $y \in \Omega \subset Y$. In particular, $\mathcal{G}_y$ should be nonempty and smooth for all $y \in \Omega$. Singular configuration spaces of the underlying bar and joint framework would introduce even more difficulties than we address here, and the behavior of local minima would be highly complicated. The disappearance of local minima on smooth configuration spaces is already difficult and interesting, and this is our topic. Both Zeeman's catastrophe machine and the larger example we discuss in Section \ref{section:elastic-four-bar} have $\mathcal{G}_y$ smooth for every $y \in \Omega$, but still display interesting catastrophic behavior which can be predicted or avoided using our methods.

We introduce the variables $\lambda_{ij}$ for $ij \in E$ to act as \textit{Lagrange multipliers} and~let
\begin{equation}\label{equation:Lagrangian-algebraic}
    L_y(x,\delta,\lambda) = \widetilde{Q}_y + \sum_{ij \in E} \lambda_{ij} g_{ij} \,.
\end{equation}

\begin{definition}\label{definition:dL-and-its-zero-set-LC}
Define the polynomial map $dL_y$ by letting its component functions be the various partial derivatives of $L_y$ with respect to $x, \delta$ and $\lambda$.
\begin{equation*}
    dL_y := \dfrac{\partial L_y}{\partial(x,\delta,\lambda)} : X \times \C^{|C|} \times \C^{|E|} \to X \times \C^{|C|} \times \C^{|E|},\;
    (x,\delta,\lambda) \mapsto dL_y\big(x,\delta,\lambda\big).
\end{equation*}
Denote the algebraic sets $\mathcal{L}_y :=  dL_y^{-1}(0) \subset X \times \C^{|C|} \times \C^{|E|} $ and
\begin{equation*}
    \mathcal{LC} := \{ (x,\delta, \lambda, y) \, | \, (x,\delta, \lambda) \in \mathcal{L}_y \} \subset X \times \C^{|C|} \times \C^{|E|} \times \Omega
\end{equation*}
and let $\mathcal{LC}_{reg}$ denote its open dense subset of smooth points and $\mathcal{LC}_{sing}$ its singular locus.
\end{definition}

\begin{proposition}\label{prop:finite-cardinality-Ly}
If the dimension of $\Omega$ and $\mathcal{LC}$ coincide, then for general $y \in \Omega$ the variety $\mathcal{L}_y$ is finite and has the same cardinality $\mathcal{N}$.
For all $y \in \Omega$ the variety $\mathcal{L}_y$ contains at most $\mathcal{N}$ isolated points.
\end{proposition}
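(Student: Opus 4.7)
The plan is to analyze the projection $\pi : \mathcal{LC} \to \Omega$ given by $(x,\delta,\lambda,y) \mapsto y$, whose fiber over $y$ is in canonical bijection with $\mathcal{L}_y$. I would begin by decomposing $\mathcal{LC} = \bigcup_i V_i$ into irreducible components; each satisfies $\dim V_i \leq \dim \mathcal{LC} = \dim \Omega$. Separate the components into those that are \emph{dominant} under $\pi$ (i.e.\ with $\overline{\pi(V_i)} = \Omega$) and those that are not. A non-dominant $V_i$ has image contained in a proper subvariety of $\Omega$. A dominant $V_i$ must satisfy $\dim V_i \geq \dim \Omega$, hence $\dim V_i = \dim \Omega$, and the theorem on the dimension of fibers then gives a zero-dimensional generic fiber of $\pi|_{V_i}$; upper semi-continuity of fiber dimension confines the locus where this fiber jumps up in dimension to a proper subvariety of $V_i$, whose image in $\Omega$ again lies in a proper subvariety.

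Next, on each dominant $V_i$ the restriction $\pi|_{V_i} : V_i \to \Omega$ is a generically finite dominant morphism of irreducible varieties, so generic \'etaleness yields a dense Zariski open $U_i \subset \Omega$ over which $\pi|_{V_i}$ is finite \'etale of some constant degree $\mathcal{N}_i$. Set $\mathcal{N} := \sum_i \mathcal{N}_i$, summed over dominant components, and remove from $\Omega$ the images of non-dominant components, the complements $\Omega \setminus U_i$, and the images $\overline{\pi(V_i \cap V_j)}$ for $i \neq j$ (to avoid double-counting points lying on multiple components; note $\dim(V_i \cap V_j) < \dim \Omega$ for distinct components). On the remaining Zariski-open dense subset $U \subset \Omega$ one has $|\mathcal{L}_y| = \mathcal{N}$ for every $y \in U$, establishing the first assertion.

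For the second assertion, fix any $y_0 \in \Omega$ and enumerate the isolated points $p_1,\dots,p_k$ of $\mathcal{L}_{y_0}$. Choose pairwise-disjoint Euclidean neighborhoods $W_j \ni p_j$ with $\overline{W_j} \cap \mathcal{L}_{y_0} = \{p_j\}$. Because $U$ is dense, there is a path $y:[0,\varepsilon) \to \Omega$ with $y(0) = y_0$ and $y(t) \in U$ for $t > 0$. The parameter-homotopy framework of numerical algebraic geometry \cite{Sommese:Wampler:2005} guarantees that each isolated point of the special fiber $\mathcal{L}_{y_0}$ is the endpoint of at least one continuation path originating in the nearby generic fiber $\mathcal{L}_{y(t)}$; equivalently, for $t$ small each $W_j$ contains at least one of the $\mathcal{N}$ points of $\mathcal{L}_{y(t)}$. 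Since the $W_j$ are pairwise disjoint, $k \leq \mathcal{N}$.

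The step I expect to be the main obstacle is this last one: the persistence-of-isolated-solutions assertion. Algebro-geometrically it says that the local intersection multiplicity of $\mathcal{LC}$ with $\pi^{-1}(y_0)$ at each isolated point of the fiber is positive, or equivalently that the local degree of $\pi$ at such a point is $\geq 1$; this is the standard but nontrivial fact underpinning parameter homotopy methods. The remaining steps reduce to routine applications of the dimension theorem, generic \'etaleness, and upper semi-continuity of fiber dimension.
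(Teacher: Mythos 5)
The paper does not actually prove this proposition; it defers entirely to a standard result (the coefficient--parameter continuation theorem, \cite[Theorem 7.1.6]{Sommese:Wampler:2005}). Your proposal is essentially an attempt to reprove that theorem, and your first part (decompose $\mathcal{LC}$ into components, discard non-dominant ones and pairwise intersections, use generic \'etaleness of the dominant components to get a constant fiber count $\mathcal{N}$ over a dense Zariski-open $U\subset\Omega$) is correct and is the standard argument.

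The gap is in the second part, and you have located it yourself but not closed it. The claim that every isolated point of a special fiber is a limit of points of nearby generic fibers is \emph{false} for an arbitrary variety $\mathcal{LC}$ with $\dim\mathcal{LC}=\dim\Omega$ --- and your own setup in the first part explicitly tolerates the configurations for which it fails. For instance, take $\Omega=\C$, variable $x$, and the (non-square) system $F_1=x(x-1)$, $F_2=(x-1)y$: its zero set is the dominant line $\{x=1\}$ together with the isolated point $(0,0)$, so $\mathcal{N}=1$ while the fiber over $y=0$ has two isolated points; the point $(0,0)$ is the endpoint of no continuation path. More generally, an isolated fiber point lying on a component of $\mathcal{LC}$ of dimension $<\dim\Omega$, or on a non-dominant component, need not persist. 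What rescues the proposition is structure you never invoke: $dL_y$ is a \emph{square} system in $(x,\delta,\lambda)$ (it is the full gradient of $L_y$), so by Krull's height theorem every irreducible component of $\mathcal{LC}$ has dimension at least $\dim\Omega$, hence exactly $\dim\Omega$ under the hypothesis. A component of dimension $\dim\Omega$ that fails to dominate has all fibers of dimension $\ge 1$ over its image, so it contributes no isolated points to any fiber; therefore every isolated point of $\mathcal{L}_{y_0}$ lies only on dominant components, where the local-properness (branched covering) argument does give a continuation path from the generic fiber, and your counting with disjoint neighborhoods $W_j$ then goes through. With that observation inserted before the persistence step, your proof is complete; without it, the second assertion is not established.
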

\begin{proof}
This is a standard result in algebraic geometry, e.g., \cite[Theorem 7.1.6]{Sommese:Wampler:2005}.
\end{proof}

\begin{definition}\label{definition:equilibrium-degree}
Given $\Omega \subset Y$ we define the \textit{equilibrium degree of a framework} to be the cardinality of $\mathcal{L}_y$ for general $y \in \Omega$. Proposition \ref{prop:finite-cardinality-Ly} implies that the equilibrium degree is well-defined.
\end{definition}

\begin{example}[Zeeman's catastrophe machine (continued)]\label{ex:zeeman-4}
We continue our running example with edges $E = \{14,24,34\}$ partitioned as $B=\{14\}$ and $C=\{24,34\}$.
Recall from Example \ref{ex:zeeman-2} we had $\Omega_\R = \left\{ \,\, (0,0,2,-1,p_{31},p_{32},1,1,1,0.5, 0.5) \,\, : \,\, (p_{31},p_{32}) \in \mathbb{R}^2 \right\} \subset Y$, and $X_\R = \left\{ \, (p_{41}, p_{42}) \, \right\} \simeq \mathbb{R}^2$. We write $x=(p_{41},p_{42})$.
The polynomials defining our constraints~are
\begin{equation*}
    g(x,\delta,y) = \left[ \begin{array}{c}
        1^{2} - {\left(0 - p_{41}\right)}^{2} - {\left(0 - p_{42}\right)}^{2}\\
        \delta_{24}^{2} - {\left(2 - p_{41}\right)}^{2} - {\left(-1 - p_{42}\right)}^{2}\\
        \delta_{34}^{2} - {\left(p_{31} - p_{41}\right)}^{2} - {\left(p_{32} - p_{42}\right)}^{2}
    \end{array} \right] = \left[ \begin{array}{c}
        0 \\
        0 \\
        0
    \end{array} \right].
\end{equation*}
We obtain the Lagrangian of (\ref{equation:Lagrangian-algebraic}) as
\begin{multline*}
    L_{(p_{31},p_{32})} =
    \frac{1}{4} {\left(\delta_{24} - 1\right)}^{2} + 
    \frac{1}{4} {\left(\delta_{34} - 1\right)}^{2} +
    {\left(1 - {p_{41}}^{2} - {p_{42}}^{2} \right)} \lambda_{14} + \\
    {\left(  \delta_{24}^{2} - {\left(2 - p_{41}\right)}^{2} - {\left(-1 - p_{42}\right)}^{2} \right)} \lambda_{24} +
    {\left(  \delta_{34}^{2} - {\left(p_{31} - p_{41}\right)}^{2} - {\left(p_{32} - p_{42}\right)}^{2}  \right)} \lambda_{34}.
\end{multline*}
The polynomial system $dL_y$ is given by
\begin{equation*}
    dL_{(p_{31},p_{32})}(x,\delta,\lambda) = \left[ \begin{array}{c}
        -2 \, \lambda_{14} {p_{41}} - 2 \, \lambda_{24} {\left(2 - p_{41}\right)} - 2 \, \lambda_{34} {\left(p_{31} - p_{41}\right)}\\
        -2 \, \lambda_{14} {p_{42}} - 2 \, \lambda_{24} {\left(-1 - p_{42}\right)} - 2 \, \lambda_{34} {\left(p_{32} - p_{42}\right)}\\
        \frac12 {\left(\delta_{24} - 1\right)} + 2 \, \delta_{24} \lambda_{24}\\
        \frac12 {\left(\delta_{34} - 1\right)} + 2 \, \delta_{34} \lambda_{34}\\
        1 - {p_{41}}^{2} - {p_{42}}^{2}\\
        \delta_{24}^{2} - {\left(2 - p_{41}\right)}^{2} - {\left(-1 - p_{42}\right)}^{2}\\
        \delta_{34}^{2} - {\left(p_{31} - p_{41}\right)}^{2} - {\left(p_{32} - p_{42}\right)}^{2}
    \end{array} \right].
\end{equation*}
The equilibrium degree for this framework is $16$. This means that for generic $(p_{31},p_{32}) \in \Omega$ the equations $dL_{(p_{31},p_{32})}(x,\delta,\lambda) = 0$ have $16$ isolated solutions over the complex numbers.
\end{example}

We have particular interest in those parameter values $y \in \Omega$ where the number of regular isolated solutions $|\mathcal{L}_y|$ of $dL_y(x,\delta,\lambda) = 0$ is less than the equilibrium degree of the framework.
\begin{definition}\label{definition:catastrophe-hypersurface}
Define the \textit{catastrophe discriminant} $\cdisc \subset \Omega$ as the Zariski closure of the set of critical values of the projection map
\begin{equation*}
    \pi: \mathcal{LC} \to \Omega, \hspace{1cm} z = (x,\delta,\lambda,y) \mapsto y = \pi(z).
\end{equation*}
By \textit{critical values} we mean those $\pi(z) \in \Omega$ such that
there exists a nonzero tangent vector $v \in T_z \mathcal{LC}$ in the kernel of the linear map $d\pi_z$.
The catastrophe discriminant is an algebraic subset of $\Omega$ of codimension 1.
\end{definition}

\begin{definition}\label{definition:catastrophe-degree}
The \textit{catastrophe degree} of an elastic tensegrity framework is the degree of the algebraic set $\cdisc$, i.e. the number of complex-valued points in the intersection of $\cdisc$ with a general linear space of complementary dimension.
\end{definition}

\begin{example}[Zeeman's catastrophe machine (continued)]\label{ex:zeeman-5} We continue with Example \ref{ex:zeeman-4}. Refer back to Figure \ref{figure:computed-catastrophe-curves} which shows the catastrophe discriminant $\cdisc \cap \Omega_\R$ for Zeeman's machine with controls $\Omega_\R$ defined in Example \ref{ex:zeeman-2}. Note that $\cdisc$ does not depend on $y$ but just on the choice of $X$ and $\Omega \subset Y$.
Here, $\cdisc$ is an algebraic plane curve of degree $72$. That is, the catastrophe degree is $72$. Over the finite field $\mathbb{Z}_{65521}$ the catastrophe discriminant $\cdisc$ is the zero set of the 2701-term polynomial
\begin{multline*}
p_{31}^{72}+13109\,p_{31}^{71}p_{32}-13055\,
      p_{31}^{70}p_{32}^{2}+10676\,p_{31}^{69}p_{32}^{3}+
      7407\,p_{31}^{68}p_{32}^{4}+4476\,p_{31}^{67}p_{32}^{5}+31981\,p_{31}^{66}p_{32}^{6} + \\
      12338\,p_{31}^{65}p_{32}^{7}-8796\,p_{31}^{64}p_{32}^{8}+19319\,p_{31   }^{63}p_{32}^{9}+4482\,p_{31}^{62}p_{32}^{10}+\ldots -709\,p_{31}-32406\,p_{32}+540 .
\end{multline*}
Figure \ref{figure:computed-catastrophe-curves} also shows the catastrophe set $\cset$ which we define below. As we move controls $y(t) \in \Omega_\R$ the set $\cset$ detects possible changes in the number of local minima, and hence possible catastrophe. This was the largest catastrophe discriminant we could compute using symbolic methods, and we had to replace $\mathbb{Q}$ by a finite field for the computation to terminate. See Section \ref{section:algebraic-computations} for a bit more discussion of this example. In contrast, the homotopy continuation methods we discuss in Section \ref{section:algebraic-computations} can handle much larger examples. 
\end{example}

\begin{definition}\label{definition:catastrophe-set-D-Omega}
 Using the map $\pi$ of Definition \ref{definition:catastrophe-hypersurface} we define 
$$\cset := \{ y \in \cdisc \cap \Omega_\R \, | \text{ there exists } (x, \delta, \lambda, y) \in \left( \pi^{-1}(y) \right)_\R \textnormal{ with $\delta \ge 0$} \, \} \subset \cdisc \cap \Omega_\R $$
to be the \textit{catastrophe set}. This is the part of the catastrophe discriminant $\cdisc$ that relates to the original problem.
\end{definition}

We note that the  \textit{catastrophe set} $\cset$ partitions $\Omega_\R$ into cells within which the
number of strict local minima is constant. Figure \ref{figure:computed-catastrophe-curves} depicts the number $|\mathcal{S}_y|$ of stable local minima for a typical point $y$ in each connected component of the complement $ \Omega_\mathbb{R} \setminus \cset$. Look ahead to Figure \ref{fig:fourbar_catastrophes} for another illustration of this phenomenon for the elastic four-bar linkage discussed in Section \ref{section:elastic-four-bar}.

\begin{proposition}\label{prop:D-omega-semialgebraic}
The \textit{catastrophe set} $\cset$ is a semialgebraic set.
\end{proposition}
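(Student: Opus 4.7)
The plan is to exhibit $\cset$ as the image under a linear projection of a set that is visibly semialgebraic, and then invoke the Tarski--Seidenberg theorem (projection of a semialgebraic set is semialgebraic).

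First, I would assemble the defining conditions into a single subset of the real ambient space $X_\R \times \R^{|C|} \times \R^{|E|} \times \Omega_\R$. Since every polynomial in our construction has real coefficients, the variety $\mathcal{LC}$ is defined over $\R$, and likewise $\cdisc$ is the Zariski closure of critical values of a real polynomial map, hence a real algebraic subvariety of $\Omega$. Let $f_1,\ldots,f_a$ be real polynomials cutting out $\mathcal{LC}$, let $h_1,\ldots,h_b$ cut out $\Omega$ as an affine subvariety of $Y$, and let $F_1,\ldots,F_c$ cut out $\cdisc$. Define
\begin{equation*}
    S := \bigl\{(x,\delta,\lambda,y) \in X_\R \times \R^{|C|} \times \R^{|E|} \times \Omega_\R \ :\ f_i = 0,\ h_j = 0,\ F_k(y) = 0,\ \delta_{ij} \ge 0 \text{ for all } ij \in C\bigr\}.
\end{equation*}
This set is cut out by finitely many polynomial equations and non-strict polynomial inequalities, so $S$ is semialgebraic by definition.

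Next, I would verify that $\cset$ equals the image of $S$ under the coordinate projection $\pi : X \times \C^{|C|} \times \C^{|E|} \times \Omega \to \Omega$ restricted to real points. Unwinding Definition \ref{definition:catastrophe-set-D-Omega}, a point $y \in \Omega_\R$ lies in $\cset$ precisely when $y \in \cdisc$ and there exists $(x,\delta,\lambda)$ real with $(x,\delta,\lambda,y) \in \mathcal{LC}$ and $\delta \ge 0$; these are exactly the defining conditions of $S$, so $\cset = \pi(S)$. Applying Tarski--Seidenberg (see, e.g., \cite[Theorem 2.2.1]{Sommese:Wampler:2005} or any standard reference on real algebraic geometry) to the polynomial map $\pi$ restricted to the semialgebraic set $S$ yields that $\cset = \pi(S)$ is semialgebraic, completing the argument.

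The only subtle point worth checking carefully is that $\cdisc$ is indeed cut out by real polynomials: this follows because $\mathcal{LC}$ and $\Omega$ are defined by polynomials with real coefficients, the critical-value locus of a real polynomial map is defined by the vanishing of real minors of a Jacobian together with the defining equations, and the Zariski closure of a real constructible set is cut out by real polynomials. Everything else is routine bookkeeping of equations and inequalities, so no serious obstacle is expected.
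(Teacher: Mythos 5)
Your argument is correct and is essentially the paper's own proof: the paper simply observes that $\cset$ is the projection of a semialgebraic set and invokes the Tarski--Seidenberg principle, which is exactly what you do, just with the semialgebraic set $S$ and the realness of the defining polynomials written out explicitly. No changes needed.
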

\begin{proof}
 $\cset$ is the projection of a semialgebraic set and hence again semialgebraic by Tarski-Seidenberg. 
\end{proof}

We now begin to prove theorems justifying our interest in $\cdisc$ and $\cset$.  Our goal is to prove Theorem \ref{theorem:path-lifting-correct}, which says that controls $y(t)$ avoiding the semialgebraic catastrophe set $\cset$ always correspond to stable local minima, and thus avoid catastrophes where local minima disappear discontinuously. This is called \textit{catastrophe} since a real-world system would be forced to move rapidly towards the nearest remaining local minima, and since without knowledge of $\cset$ this sudden change in behavior would be very surprising.

For the remainder of this section we work mostly with real algebraic sets, since our goal is connecting back to the original problem. Since all the complex algebraic sets were defined by polynomials with real coefficients, the same polynomials define real algebraic sets which can be identified with subsets of the complex sets whose coordinates have zero imaginary part.
In particular, at smooth points of a complex algebraic set, the tangent space is equal to the kernel of the Jacobian of the defining polynomials, which has full rank if we have a complete intersection, or if we have applied standard methods of randomizing overdetermined systems in numerical algebraic geometry. Since the defining polynomials have real coefficients, so do their partial derivatives, so that evaluating the Jacobian at a real point yields a matrix with real entries.
A basis for the kernel of such a matrix can always be chosen with vectors whose coordinates are also real-valued. In what follows, if $A \subset \C^m$ then $T_a A_\R$ denotes such a real tangent space at the real point $a \in A_\R$. In particular, if $a \in A_\R$ is a smooth point of the complex algebraic set $A$ then $\text{dim}_\C T_a A = \text{dim}_\R T_a A_\R$ and the set $A_\R$ is a manifold near $a$.

\begin{lemma}\label{lemma:singular-implies-critical}
If $z = (x,\delta,\lambda,y) \in \mathcal{LC}_{\text{sing}}$ then $\pi(z) \in \mathcal{D}_\Omega$.
\end{lemma}

\begin{proof}
Since $\Omega$ is a smooth algebraic set $\Omega = \phi^{-1}(0)$ for some polynomial map $\phi:Y \to \C^k$ not depending on $(x,\delta,\lambda)$. Then $z \in \mathcal{LC}_{\text{sing}}$ implies a rank drop in the Jacobian matrix $dF$ of the polynomial map $F = ( dL_y, \phi)$. But $dF$ has a block structure with top rows $[ A | B ]$ and bottom rows $[ 0 | C ]$ where $A$ is the square Jacobian of $dL_y$ at $(x,\delta,\lambda)$, $B$ is unimportant, and $C$ is the Jacobian of $\phi$ at $y$. Since $\Omega$ is smooth, the bottom rows have full rank and since $z \in \mathcal{LC}_{\text{sing}}$ there exists $(w,0) \neq 0$ with $(w,0)^T dF = 0$, which implies $w^T A = 0$ and hence the square matrix $A$ drops rank. Thus there is $v \neq 0$ with $A v = 0$ and so $(v,0) \in T_z \mathcal{LC}$ with $d\pi_z (v,0) = 0$, which completes the proof.
\end{proof}

\begin{lemma}\label{lemma:singular-implies-critical-real}
If $z \in \mathcal{LC}_\R$ is singular and $\delta \geq 0$ then $\pi(z) \in \cset$.
\end{lemma}
\begin{proof}
By Lemma \ref{lemma:singular-implies-critical}, $\pi(z) \in \cdisc$. But $z \in \mathcal{LC}_\R$ implies all of $x,\delta,\lambda,y$ are real-valued, and with $\delta \geq 0$ we have that $\pi(z)$ satisfies the requirements for belonging to $\cset$.
\end{proof}

\begin{lemma}\label{lemma:dimension-conditions}
Let $y:[0,1] \to (\Omega_\R \setminus \cset)$ avoid $\cset$. If the smooth curve $z:[0,1] \to \mathcal{LC}_\R$ satisfies $\pi(z(t))=y(t)$ and $\delta(t) \geq 0$ then $\text{dim } T_{z(t)} \mathcal{LC}_\R = \text{dim } T_{y(t)} \Omega_\R$
holds for all $t$ if it holds for some $t$.
\end{lemma}
\begin{proof}
Since $\Omega$ is assumed smooth then $\Omega_\R$ is smooth and $t \mapsto y(t)$ stays within one path-connected component, hence $\text{dim } T_{y(t)} \Omega_\R$ is constant for all $t$. Since the path $t \mapsto z(t)$ stays in one path-connected component of $\mathcal{LC}_\R$, the dimension may only change if some $z(t_*)$ is singular. However, since $z(t_*)$ is real-valued and $\delta(t_*) \geq 0$ then Lemma \ref{lemma:singular-implies-critical-real} implies that $\pi(z(t_*)) = y(t_*) \in \cset$, contradicting our assumption.
\end{proof}

We now introduce conditions that will eventually correspond to stability of the elastic tensegrity framework.
For this, we first define a \emph{nondegenerate} point $z \in \mathcal{LC}_\R$ using second-order sufficient conditions for local minima of nonlinear constrained optimization problems. 

\begin{definition}\label{definition:strict-local-minimum-positive-definite-projected-Hessian} 
Let $d^2 \widetilde{Q}_y$ and $d^2 g_{ij}$ be the Hessian matrices of $\widetilde{Q}_y$ and $g_{ij}$ respectively, when viewed as real-valued functions of the real variables $x$ and $\delta$, with $d^2 \widetilde{Q}(x,\delta)$ and $d^2 g_{ij}(x,\delta)$ denoting their evaluation at the point $(x,\delta)$. Let $dg$ denote the Jacobian of the constraints $g$ viewed again as real-valued functions of the real variables $x$ and $\delta$, with $dg(x,\delta)$ its evaluation at a point. Let
\begin{equation}\label{equation:hessian-of-lagrangian}
    H = d^2 \widetilde{Q}_y(x,\delta) + \sum_{ij \in E} \lambda_{ij} d^2 g_{ij}(x,\delta).
\end{equation}
We say that $z = (x,\delta,\lambda,y) \in \mathcal{LC}_\R$ is \emph{nondegenerate} if 
there is a positive definite matrix
\begin{equation}\label{equation:proj-hessian}
    W(z) := V^T H V,
\end{equation}
where $V$ has columns a real orthonormal basis of the kernel of $dg(x,\delta)$. The matrix $W$ is called the projected Hessian and, with our setup, its being positive definite guarantees that $(x,\delta)$ is a strict local minimum for $\widetilde{Q}_y$ restricted to $(\mathcal{G}_y)_\R$. See, e.g., \cite[page 81]{Gill:Murray:Wright:1982}.
\end{definition}

\begin{lemma}\label{lemma:proj-hessian-zero}
Let $z \in \mathcal{LC}_\R$ have singular $W(z)$ and $\delta \geq 0$. Then $\pi(z) \in \cset$.
\end{lemma}
\begin{proof}
First note that $W(z)$ is only defined up to a choice of orthonormal basis in $V$, but the property of being singular is invariant under such changes. Since $W(z)$ is singular,
there exists $u \neq 0$ with $V^T H V u = 0$.
Placing parentheses $V^T \Big( H V u \Big) = 0$ we see that $H V u$ is in the normal space of $(\mathcal{G}_{y})_\R$ at $(x,\delta)$. But then there must exist a linear combination $w$ writing $H V u$ in terms of the columns of $dg(x,\delta)^T$, and hence $(V u, -w) \in \text{ker } d^2 L$ where 
\begin{equation*}
    d^2 L = \left[ \begin{array}{cc}
        H & dg(x,\delta)^T \\
        dg(x,\delta) & 0
    \end{array} \right]
\end{equation*}
is the Hessian of the Lagrangian $L_{y}$ of (\ref{equation:Lagrangian-algebraic}).
Note that the vector $(V u, -w)$ extends to a tangent vector $(Vu, -w, 0)$ of $T_{z} \mathcal{LC}_\R$ by appending zeros in the $\Omega$ components. This tangent vector clearly projects to zero by $d \pi_{z}$. Since $z \in \mathcal{LC}_\R$ and $\delta \geq 0$ this means that $\pi(z) \in \cset$.
\end{proof}

\begin{theorem}\label{theorem:pathlifting-local-minima}
Let $z:[0,1] \to \mathcal{LC}_\R$ with components $z(t) = (x(t),\delta(t),\lambda(t),y(t))$ be a smooth curve of smooth points in $\mathcal{LC}_\R$ with $\delta(t) \geq 0$ and $\pi(z(t)) \notin \cset$ for all $t$. If the initial point $z(0)$ is nondegenerate then $(x(t),\delta(t))$ are strict local minima for $\widetilde{Q}_{y(t)}$ on $(\mathcal{G}_{y(t)})_\R$ for all $t$.
\end{theorem}

\begin{proof}
Since $t \mapsto z(t)$ is a smooth curve of smooth points we can find a smooth curve $t \mapsto V(t)$ of matrices whose columns form a basis for $\text{ker } dg(x(t),\delta(t))$ at each $t$, and an associated smooth curve of matrices $t \mapsto H(t)$ by using $x(t),\delta(t),\lambda(t),y(t)$ in formula (\ref{equation:hessian-of-lagrangian}) above.
Since $z(0)$ is assumed nondegenerate, $V(0)^T H(0) V(0)$ has all positive eigenvalues.
As $t$ varies smoothly, so do the real eigenvalues of the symmetric matrices $V(t)^T H(t) V(t)$. Suppose at some $t$ there appears a zero eigenvalue in $V(t)^T H(t) V(t)$.
Then Lemma \ref{lemma:proj-hessian-zero} implies that $\pi(z(t)) \in \cset$, a contradiction.
Therefore $V(t)^T H(t) V(t)$ remains positive definite for all $t$, which by the sufficient conditions for strict local minima \cite[page 81]{Gill:Murray:Wright:1982} completes the proof.
\end{proof}

We now discuss how our algebraic reformulation relates back to the original problem.
In our algebraic reformulation we removed the square roots by introducing the additional variables $\delta_{ij}$ for $ij \in C$.
In the following Lemma we assume that all elastic cables are in tension since such systems are only structurally stable when self-stress is induced.

\begin{lemma}\label{lemma:relate-critical-back}
Consider an elastic tensegrity framework in stable configuration $(x,y) \in \mathcal{SC}$.
Define $\delta_{ij} = \sqrt{ \sum_{k \in [d]} (p_{ik} - p_{jk})^2}$ and let
\begin{equation}\label{equation:cables-in-tension}
    \delta_{ij} > r_{ij} > 0
\end{equation}
for every $ij \in C$, so that all elastic cables are in tension.
Then there exists $\lambda \in \R^{|E|}$ such that $(x,\delta,\lambda,y) \in \mathcal{LC}_\R$. 
\end{lemma}
\begin{proof} 
Let $V_{b,y} := \{ x \in X : b(x,y) = 0 \}$. Now consider the map $s_y:X \to \mathbb{R}^{|C|}$ defined by coordinate functions $(s_y)_{ij}(x) =  \sqrt{\sum_{k \in [d]} (p_{ik} - p_{jk})^2} $. Restricting this map to $V_{b,y}$ we have a local diffeomorphism between $V_{b,y}$ and its graph
\begin{equation*}
    \{ \, (x, \, s_y(x) ) \, : x \in V_{b,y} \} \subset X \times \mathbb{R}^{|C|}
\end{equation*}
which provides a local diffeomorphism between $V_{b,y}$ and $(\mathcal{G}_y)_\R$ near any point $x \in V_{b,y}$ satisfying (\ref{equation:cables-in-tension}). Observe that, by construction, $\widetilde{Q}_y$ takes values on the image points equal to the values taken by $Q$ on the domain $V_{b,y}$, provided condition (\ref{equation:cables-in-tension}) holds. Therefore, $x \in V_{b,y}$ is a strict local minimum of $Q$ on $V_{b,y}$ if and only if $(x,s_y(x))$ is a strict local minimum of $\widetilde{Q}_y$ on $(\mathcal{G}_y)_\R$.
Hence, by first-order necessary conditions for local extrema we know that there exists $\lambda$ such that $(x,s_y(x),\lambda,y) \in \mathcal{LC}_\R$, completing the proof. 
\end{proof}

Finally, we are able to prove that the stability of the corresponding elastic tensegrity framework is preserved by avoiding only $\cset \subset \Omega_\R$.

\begin{theorem}\label{theorem:path-lifting-correct}
If $z(0) \in \mathcal{LC}_\R$ is nondegenerate, $\text{dim } T_{z(0)} \mathcal{LC}_\R = \text{dim } T_{y(0)} \Omega_\R$, and $y:[0,1] \to (\Omega_\R \setminus \cset)$ is a smooth path of controls, then there exists a unique smooth map $z:[0,1] \to \mathcal{LC}_\R$ with $\pi(z(t)) = y(t)$ and $(x(t),y(t)) \in \mathcal{SC}$ for all $t \in [0,1]$, provided that condition (\ref{equation:cables-in-tension}) holds for all $t \in [0,1]$.
\end{theorem}

\begin{proof}
Since $z(0)$ is nondegenerate we claim that $d( \pi|_{\mathcal{LC}_\R} )_{z(0)}: T_{z(0)} \mathcal{LC}_\R \to T_{y(0)} \Omega_\R$ is an isomorphism. First, note that $z(0)$ is a smooth point of $\mathcal{LC}_\R$ by Lemma \ref{lemma:singular-implies-critical-real} and the assumption that $y(t) \notin \cset$.
Thus $d( \pi|_{\mathcal{LC}_\R} )_{z(0)}$ is an isomorphism unless there is $v \neq 0$ with $v \in T_{z(0)} \mathcal{LC}_\R$ and $d\pi_{z(0)}(v) = 0$. The second condition implies that $v = (v_1, v_2, 0)$ where at least one of $v_i \neq 0$.
But then $(v_1,v_2,0) \in T_{z(0)} \mathcal{LC}_\R$ implies $H v_1 + dg^T v_2 = 0$ and $dg \,\, v_1 = 0$. If $v_1 = 0$, then $dg^T v_2 = 0$ implies $v_2 = 0$ since $dg$ is surjective. Thus $v\neq 0$ implies $v_1 \neq 0$ so that $v_1 = V u$ for some $u \neq 0$. Then $Hv_1 + dg^T v_2 = 0$ implies $HVu + dg^T v_2 = 0$. But $V^T dg^T = 0$ so that $V^T H V u = 0$ for $u \neq 0$, contradicting the nondegeneracy of $z(0)$.
Therefore $d( \pi|_{\mathcal{LC}_\R} )_{z(0)}$ is an isomorphism and the inverse function theorem implies that $\pi$ is a local diffeomorphism at $z(0)$.

Let $\mathcal{U}$ be the  largest  neighborhood of $y(0)$ in $\Omega_\R$ such that there is a neighborhood $\mathcal{V}$ of $z(0)$ in $\mathcal{LC}_\R$ with $\pi|_\mathcal{V}:\mathcal{V} \to \mathcal{U}$ a diffeomorphism.  Assume for now that $\mathcal{U}$ exists. 
We then define $z(t) := \pi|_\mathcal{V}^{-1} (y(t))$ for all $t$ with $y(t) \in \mathcal{U}$.
With $z(t)$ defined we check if $\delta_{ij}(t) > r_{ij}(t) > 0$ holds.
By Lemma \ref{lemma:singular-implies-critical-real} and $y(t) \notin \cset$ all such $z(t)$ are smooth points of $\mathcal{LC}_\R$. By Theorem \ref{theorem:pathlifting-local-minima} we conclude that $(x(t),\delta(t))$ are strict local minima for $\widetilde{Q}_{y(t)}$ on $\mathcal{G}_{y(t)}$ for all such $t$.
Using the maps $s_{y(t)}$ from the proof of Lemma \ref{lemma:relate-critical-back} we have local diffeomorphisms mapping the strict local minima for $\widetilde{Q}_{y(t)}$ on $\mathcal{G}_{y(t)}$ to strict local minima of $Q$ on $V_{b,y(t)}$ so that $(x(t),y(t)) \in \mathcal{SC}$ for all such $t$. If $y(1) \in \mathcal{U}$, we are done.

Otherwise there exists $\epsilon > 0$ with $y(t) \in \mathcal{U}$ for all $t \in [0,\epsilon)$ and $y(\epsilon) \notin \mathcal{U}$. Define $z(t)$ as above for all $t \in [0,\epsilon)$ and consider the limit as $t \to \epsilon$. We know that $y(\epsilon) \notin \cset$ by assumption. Let $z(\epsilon) := \text{lim } z(t)$ as $t \to \epsilon$.
Then $z(\epsilon)$ is again real-valued. If $z(\epsilon)$ were singular, then Lemma \ref{lemma:singular-implies-critical-real} implies $\pi(z(\epsilon)) = y(\epsilon) \in \cset$, a contradiction. Thus $z(\epsilon)$ is a smooth point of $\mathcal{LC}_\R$.
Also by the same argument as the proof of Theorem \ref{theorem:pathlifting-local-minima} we know that all $W(z(t))$ for $t < \epsilon$ have positive eigenvalues.
If $W(z(\epsilon)) := \text{lim } W(z(t))$ as $t \to \epsilon$ has a zero eigenvalue, then Lemma \ref{lemma:proj-hessian-zero} implies that $\pi(z(\epsilon)) \in \cset$, again a contradiction. Thus all eigenvalues of $W(z(\epsilon))$ are positive so that $z(\epsilon)$ is nondegenerate, and previous arguments imply that $(x(\epsilon),y(\epsilon)) \in \mathcal{SC}$ as well.
By Lemma \ref{lemma:dimension-conditions} we have $\text{dim } T_{z(\epsilon)} \mathcal{LC}_\R = \text{dim } T_{y(\epsilon)} \Omega_\R$.
Replacing $z(0)$ by $z(\epsilon)$ in our previous argument we can find neighborhoods $\mathcal{U}'$ of $y(\epsilon)$ and $\mathcal{V}'$ of $z(\epsilon)$ with $\pi|_{\mathcal{V}'}:\mathcal{V}' \to \mathcal{U}'$ a diffeomorphism. But then $\mathcal{U} \cup \mathcal{U}'$ is a strictly larger neighborhood of $y(0)$ satisfying the same conditions, contradicting our choice of $\mathcal{U}$. Thus $y(1) \in \mathcal{U}$, as needed.

 To complete the proof, we use Zorn's lemma to show $\mathcal{U}$ exists. Let $\mathcal{A}$ contain all neighborhoods $\mathcal{U}_a$ of $y(0)$ in $\Omega_\R$ such that there exists a neighborhood $\mathcal{V}_a$ of $z(0)$ in $\mathcal{LC}_\R$ which is diffeomorphic to $\mathcal{U}_a$ by restricting $\pi$ to $\mathcal{V}_a$. $\mathcal{A}$ is partially ordered by set inclusion. $\mathcal{A}$ is nonempty since we showed $\pi$ is a local diffeomorphism at $\pi(z(0)) = y(0)$. Let $\mathcal{B}$ be a chain in $\mathcal{A}$, i.e. a subset of $\mathcal{A}$ that is totally ordered. Then the union $\mathcal{C} = \cup \,\, \mathcal{U}_b$ of all the elements of $\mathcal{B}$ is an upper bound for the chain $\mathcal{B}$, since it contains each element of $\mathcal{B}$ and it is an element of $\mathcal{A}$. To see this, consider that $\mathcal{C}$ is a union of open sets containing $y(0)$ and therefore is a neighborhood of $y(0)$. Let $\mathcal{D}$ be the union of all the neighborhoods $\mathcal{V}_b$ of $z(0)$ which exist for each $\mathcal{U}_b \in \mathcal{B}$. Then $\mathcal{D}$ is again a neighborhood of $z(0)$ and the same map $\pi$ was used for the diffeomorphisms by restricting to each of the $\mathcal{V}_b$ and hence also provides the required diffeomorphism between $\mathcal{D}$ and $\mathcal{C}$. Thus every chain has an upper bound, and Zorn's lemma implies there is a maximal element in $\mathcal{A}$. This completes the proof.
\end{proof}

\section{Computations using numerical nonlinear algebra}\label{section:algebraic-computations}
In this section we use the algebraic reformulation developed in the previous section to describe numerical nonlinear algebra routines that can be used to answer the following three computational problems:
\begin{enumerate}
    \item Given $\gamma \in \Omega_{\R}$ compute $\mathcal{S}_\gamma$.
    \item Given an algebraic path $y(t): [0,1] \to \Omega_{\R} \subset Y$ and an initial configuration $x_0 \in \mathcal{S}_{y(0)}$ compute the path points $\gamma \subset y([0,1])$ where a catastrophe might occur (a local minimum disappears).
    \item Given a control set $\Omega$ compute the catastrophe set $\cset$.
\end{enumerate}

We start with the first question.
Recall that $dL_y$ is a polynomial system.
We can compute all isolated solutions of a polynomial system using homotopy continuation methods \cite{Sommese:Wampler:2005}. Homotopy continuation methods work by first solving a compatible but simpler start system and then keeping track of these solutions as the start system is deformed into the system we intended to solve originally (the target system).
For our computations we use the software package \texttt{HomotopyContinuation.jl} \cite{BT2018}.
To compute $\mathcal{S}_{\gamma}$ for a given $\gamma \in \Omega_{\R}$ we therefore first solve $dL_\gamma(x,\delta,\lambda) = 0$ which results in finitely many complex solutions $\mathcal{L}_\gamma$.
Of these complex solutions we then select those solutions whose components are real-valued and then further select those real-valued solutions where the projected Hessian defined in \eqref{equation:proj-hessian} is positive definite.
Note that computing solutions to $dL_\gamma(x,\delta,\lambda)=0$ usually requires that we track many more paths than the equilibrium degree of $\mathcal{L}_\gamma$. If the goal is to compute $\mathcal{S}_y$ for many different $y \in \Omega_{\R}$ it is more efficient to use a \emph{parameter homotopy} \cite{morgan1989coefficient, Sommese:Wampler:2005}.
There, the idea is to first compute $\mathcal{L}_{y_0}$ for a general (complex) $y_0 \in \Omega$ and then to use the parameter homotopy $H(z,t) = dL_{ty_0 + (1-t)y}(z)$ to efficiently compute $\mathcal{L}_{y}$. Using this parameter homotopy approach allows us to track only the minimal numbers of paths that still guarantee all solutions in $\mathcal{S}_y$ are computed correctly.

Consider the second question where we are given an algebraic path $y(t): [0,1] \to \Omega_{\R} \subset Y$ and an initial configuration $x_0 \in \mathcal{S}_{y(0)}$. We want to compute the path points $\gamma \subseteq y([0,1])$ where a catastrophe might occur.
From the results in Section \ref{section:algebraic-from-the-start} it follows that we want to compute the intersection of $\cset$ and $y([0,1])$.
For this, we first compute the intersection $\cdisc \cap \alpha$ where $\alpha \subset \Omega$ is an algebraic curve containing $y([0,1])$.
For simplicity, we assume that we have the general situation that $\alpha \not\subset \cdisc$.
The catastrophe discriminant $\cdisc$ is given by $\pi(H_\Omega^{-1}(0)))$ with $\pi$ from Definition \ref{definition:catastrophe-hypersurface} and
$$
H_\Omega (x,\delta,\lambda,y) = \begin{bmatrix}
    dL_y(x,\delta,\lambda) \\
    \det\, d^2L_y(x,\delta,\lambda) 
\end{bmatrix} \,.
$$
Since the evaluation of a determinant is numerically unstable it is better to instead use the formulation that there exists a $v \in \mathbb{P}^n$ such that $d^2L_y(x,\delta,\lambda) \cdot v = 0$.
Consider the collection $\{H_\Omega, \pi, \pi^{-1}(\alpha), \mathcal{W}\}$ where $H_\Omega$ and $\pi$ are the polynomial maps defined above and $ \mathcal{W} = \pi^{-1}(\alpha) \cap H_\Omega^{-1}(0)$ contains finitely many solution points. In the case that $\alpha$ is a line this is known as a \emph{pseudo-witness set} \cite{HAUENSTEIN20103349} since it allows us to perform computations on $\cdisc$ without knowing its defining polynomials explicitly.
Since $\mathcal{W}$ is the zero set of a polynomial system it can again be computed by using homotopy continuation techniques.
If $\alpha$ is a line then cardinality of $\mathcal{W}$ is the catastrophe degree of the tensegrity framework.
To compute $\cset \cap y([0,1])$ given $\mathcal{W}$ we have to select from $(x, \delta, \lambda, \gamma) \in \mathcal{W}$
all those solutions which have real-valued coordinates, $\delta > 0$, and $\gamma \in y([0,1]) \subseteq \alpha$. 

We move to the third question and discuss the computation of the catastrophe set $\cset$.
This is more involved since $\cset$ is a positive-dimensional set and we have to decide what ``compute'' means in our context.
Since $\cset$ is a semialgebraic set it can theoretically be defined by a union of finite lists of polynomial equalities and inequalities.
However, computing the describing polynomials is a very challenging computational problem since it requires Gr\"obner bases computations which have exponential complexity. We were able to compute the polynomial defining $\cdisc$ in Example \ref{ex:zeeman-5}, but only over a finite field, and larger examples will likely fail to terminate.
Instead we opt to obtain a sufficiently dense point sample of $\cset$.
The idea is to apply the previously described technique to compute repeatedly the intersection of $\cset$ and a real line $\ell \subset \Omega_\R$.
To proceed we first compute a pseudo-witness set $\{H_\Omega, \pi, \pi^{-1}(\ell_0), \mathcal{W}_0\}$
for a general (complex) line $\ell_0 \subset \Omega$ and then we can compute the
pseudo witness set  $\{H_\Omega, \pi, \pi^{-1}(\ell),\mathcal{W}\}$ by utilizing a parameter homotopy. As discussed above, this is much more efficient for the repeated solution of our equations.
Note that even if the real lines $\ell \subset \Omega_\R$ are sampled uniformly, 
this does not guarantee that the obtained sample points converge to a uniform sample of $\cset$. If uniform sampling is of interest, the procedure can be augmented with a rejection step as described in \cite{breiding2020random}.
The outlined procedure is an effective method to sample the catastrophe discriminant $\cdisc$.
Figure \ref{figure:computed-catastrophe-curves} depicts the point samples obtained for Zeeman's catastrophe machine using this~method, while Figure \ref{fig:fourbar_catastrophes} depicts those obtained for the elastic four-bar framework of Section \ref{section:elastic-four-bar}.

\section{Example: Elastic four-bar framework}\label{section:elastic-four-bar}

We want to demonstrate the developed techniques on another example.
For this we consider a planar four-bar linkage which is constructed from four bars connected in a loop by four rotating joints where one link of the chain is fixed.
The resulting mechanism has one degree of freedom.
Four-bar linkages are extensively studied in mechanics as well as numerical nonlinear algebra \cite{wampler2011numerical}.
Here, we extend a four-bar linkage to an elastic tensegrity framework by introducing two nodes which are attached to the two non-fixed joints by elastic cables.
Formally, we introduces six nodes with coordinates $p_1,\ldots,p_6 \in \R^2$, bars $B=\{12,23,34,41\}$ and elastic cables $C=\{35,46\}$. See Figure \ref{fig:fourbar-abstract} for an illustration of this basic setup.

 \begin{figure}[H]
     \centering
     \includegraphics[width=0.45\textwidth]{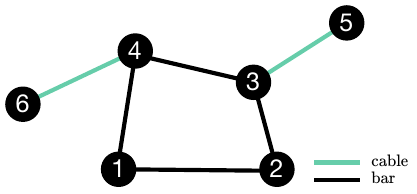}
     \caption{Setup of a four-bar elastic tensegrity framework.}
     \label{fig:fourbar-abstract}
 \end{figure}

The zero set of the bar constraints $b_{ij}$, $ij \in B$, is a curve of degree 6 which can be parameterized by the plane curve traced out by the motion of the midpoint $(p_3+p_4)/2$. In kinematics terminology the midpoint is a \emph{coupler point} and the plane curve is called the \textit{coupler curve} of the \textit{mechanism}.

The idea is to fix nodes 1, 2 and 5, and control node 6. For our model this means choosing $X=\{(p_{31}, p_{32}, p_{41}, p_{42})\} \simeq \mathbb{R}^4$ as internal parameters and
$ \Omega = \{ (p_{61}, p_{62}) \} \simeq \mathbb{R}^2$ as control parameters. Furthermore, we fix nodes $p_1 = (-1,0)$, $p_2=(1,0)$, $p_5=(4,3)$, bar lengths $l_{23}=3$, $l_{34}=1$, $l_{14}=1.5$, resting lengths $r_{35}=r_{46}=0.1$ and elasticities $c_{35}=1$, $c_{46}=2$.

In this setup the framework has an equilibrium degree of 64.
The resulting catastrophe discriminant $\cdisc$ is a curve of degree 288. $\cdisc$ and the catastrophe set $\cset$ are depicted in Figure \ref{fig:fourbar_catastrophes}.
The typical sizes of the stability set $\mathcal{S}_\gamma$, $\gamma \in \Omega$, are 2, 3 and 4.

\begin{figure}[H]
    \centering
    \includegraphics[width=0.9\textwidth]{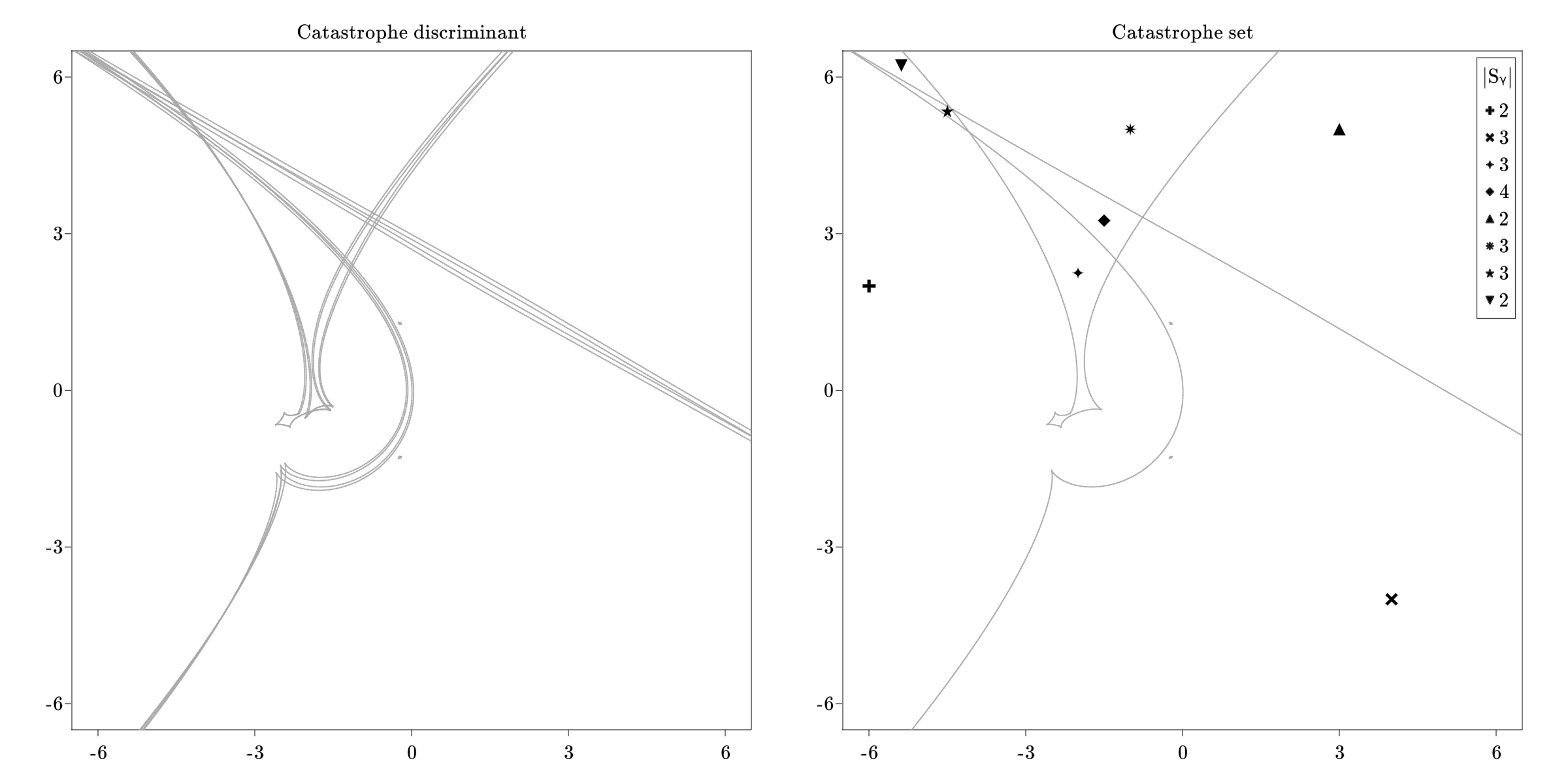}
    \caption{The catastrophe discriminant (left) and catastrophe set (right) of the elastic four bar framework. The cardinality of the stability set for points in each chamber of the complement of the catastrophe set is shown in the upper right corner.} 
    \label{fig:fourbar_catastrophes}
\end{figure}

\begin{figure}[H]
    \centering
    \includegraphics[width=0.90\textwidth]{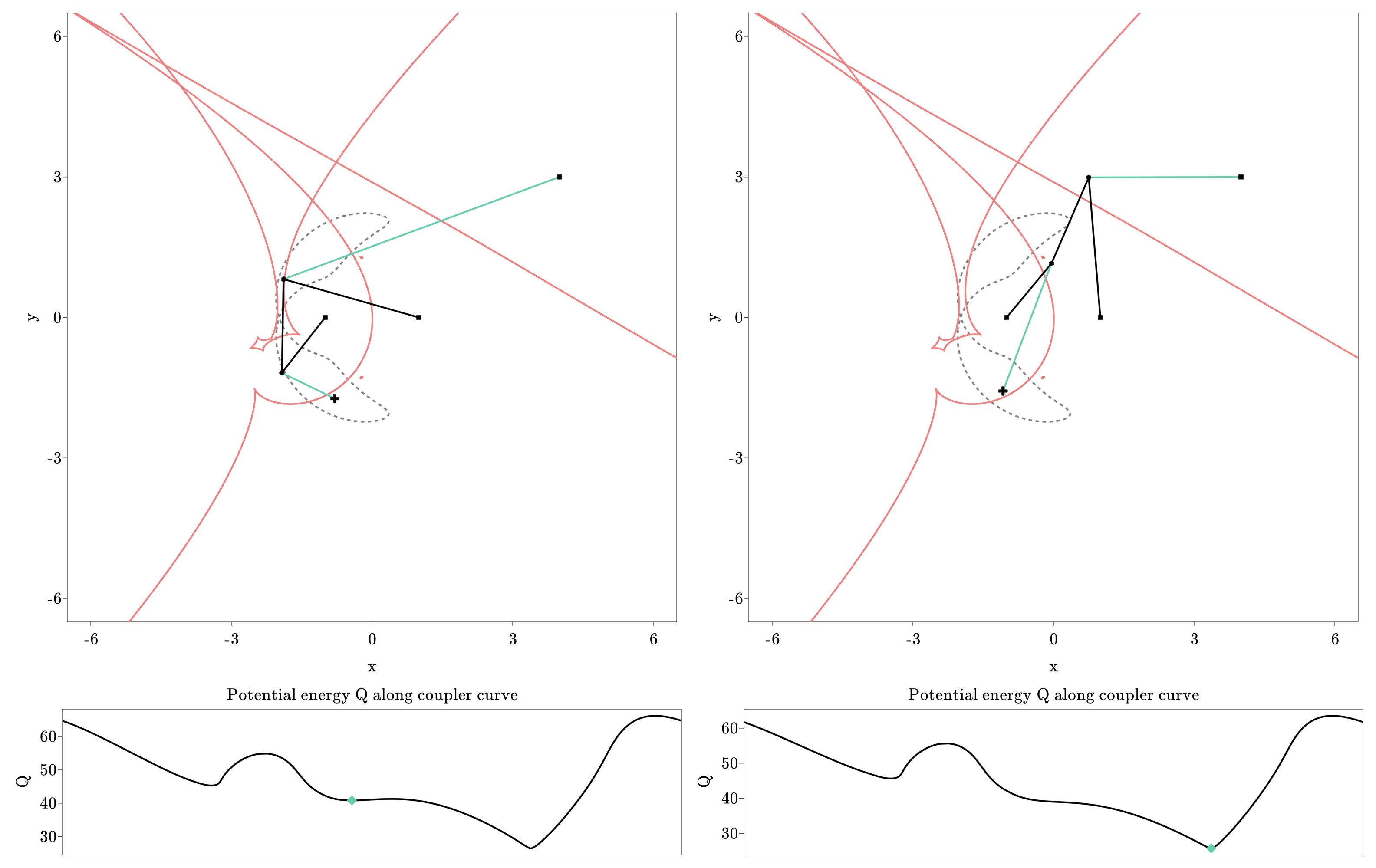}
    \caption{\footnotesize Left: The elastic four bar framework in a stable configuration. Right: Configuration of the framework after crossing the catastrophe set.
    The gray dashed line is the coupler curve of the four bar linkage traced out by the midpoint of the bar connecting node 2 and 3. The coupler curve parameterizes all possible four bar positions.
    The catastrophe set $\cset$ is depicted in red.
    At the bottom are the energy landscapes along the coupler curve with the current position depicted in green.}
    \label{fig:fourbar_before_after}
\end{figure}

Finally, we also want to give in Figure \ref{fig:fourbar_before_after} another concrete example of a catastrophe. There, the control node 5 is depicted by a cross and it is dragged in a straight line between its position in the left figure and its position in the right figure.
When the control node crosses the catastrophe set $\cset$, its previously stable position disappears from $\mathcal{S}_y$, and the framework ``jumps'' to a new position. Again, without knowledge of $\cset$ these catastrophes are extremely surprising. With knowledge of $\cset$ and Theorem \ref{theorem:path-lifting-correct} they become avoidable.

\section{Conclusion and future work}

This article described \textit{elastic tensegrity frameworks} as a large family of simple models based on Hooke's law and energy minimization. For this family we showed how to explicitly calculate and track all stable equilibrium positions of a given framework. More importantly, we showed how to calculate the catastrophe set $\cset$ by using pseudo-witness sets to encode a superset $\cdisc \supset \cset$. To do this we reformulated the problem algebraically to take advantage of tools in numerical nonlinear algebra. Knowing the catastrophe set provides extremely useful information, since Theorem \ref{theorem:path-lifting-correct} shows that paths of control parameters avoiding $\cset$ will also avoid discontinuous loss of equilibrium, and hence avoid surprising large-scale shape changes.

In our two illustrative examples, we chose the controls $\Omega$ as a two-dimensional space overlaid with the configuration itself. These choices were made to demonstrate the ideas. However, the calculation and tracking of all stable local minima by parameter homotopy and the encoding of $\cdisc \supset \cset$ by pseudo-witness sets apply much more generally. The control set $\Omega$ can be chosen in any way, and all the same methods apply, even if there are no easy visualizations for the controls desired. Therefore, for more complicated sets of control parameters $\Omega$ it is of interest to develop more efficient local sampling techniques based on Monte Carlo methods \cite{zappa2018monte}, perhaps only sampling $\cset$ locally near the initial configuration $(x(0),y(0))$ or locally near the intended path $y([0,1])$. For example, it may be enough to know only the points of $\cset$ nearest to a given initial or current position $y(t)$.

We would also mention recent work \cite{bernal2020machine} which details a sampling scheme whose goal is to learn the real discriminant of a parametrized polynomial system, as well as the number of real solutions on each connected component. They combine homotopy continuation methods with $k$-nearest neighbors and deep learning techniques. For elastic tensegrity frameworks, these techniques might be used to learn $\cdisc \cap \Omega_\mathbb{R}$.

Finally we discuss the potential of our results for use in \textit{mechanobiology} \cite{IngberWangStamenovic2014TensegrityCellBioPhysicsMechanicsReview},
where scientists have  frequently and successfully used tensegrity to model cell mechanics. Even small and simple elastic tensegrity frameworks (e.g. with 6 or 12 rigid bars, plus more cables) have been used to explain and predict experimental results observed in actual cells and living tissue
\cite{DeSantis-2011-cell-elasticity-tensegrity-model, volokh-2000-tensegrity-model-predicts-linear-stiffening-living-cells, Coughlin-stamenovic-2003-prestressed-cable-model-cytoskeleton, wang-stamenovic-2002-cell-prestress-adherent-contractile-cells}.
However, the tensegrity paradigm is not universally accepted in mechanobiology in part because it is viewed as a static theory, unable to explain dynamic, time-dependent phenomena \cite[see pages 13-16]{IngberWangStamenovic2014TensegrityCellBioPhysicsMechanicsReview}. It is here where catastrophe sets could play a role. We believe \textit{qualitative} phenomena observed in actual experiments could be predicted or explained by elastic tensegrity frameworks.
Knowing the catastrophe set for a simple tensegrity model with a biology-informed choice of $\Omega$ would give catastrophe predictions that could then be tested experimentally.

\bibliographystyle{plain}
\bibliography{references}

\bigskip \bigskip

\noindent
\footnotesize {\bf Authors' addresses:}

\smallskip
\noindent Alex Heaton, Technische Universit\"at Berlin and Max Planck Institute for Mathematics in the Sciences, {\tt heaton@mis.mpg.de, alexheaton2@gmail.com}

\noindent Sascha Timme, Technische Universit\"at Berlin, {\tt timme@math.tu-berlin.de}

\end{document}